\providecommand{\noopsort}[1]{} 
\theoremstyle{plain}
\newtheorem{theorem}{Theorem}[section]
\newtheorem*{nonumbercorollary}{Corollary}
\newtheorem{lemma}[theorem]{Lemma}
\newtheorem*{maintheorem}{Main Theorem}
\theoremstyle{definition}
\theoremstyle{remark}
\numberwithin{equation}{section}
\newcommand{\ga}{\gamma}\newcommand{\de}{\delta}
\newcommand{\N}{\mathbb{N}}
\newcommand{\R}{\mathbb{R}}
\newcommand{\s}{\mathbb{S}}
\newcommand{\of}[1]{\left(#1\right)}
\newcommand{\floor}[1]{\left\lfloor #1 \right\rfloor}
\newcommand{\pfrac}[2]{\left(\frac{#1}{#2}\right)}
\newcommand{\st}{~|~}
\title[Characterizations of the round $2$--sphere in terms of closed geodesics]{Characterizations of the round two-dimensional sphere in terms of closed geodesics}
\author{Lee Kennard}
\address{Department of Mathematics, University of California, Santa Barbara, CA 93106}
\email{kennard@math.ucsb.edu}
\author{Jordan Rainone}
\address{Department of Mathematics, SISSA International School for Advanced Studies, Via Bonomea, 265, 34136 Trieste, Italy}
\email{jrainone@sissa.it}
\date{\today}
\begin{document}
\begin{abstract}
The question of whether a closed Riemannian manifold has infinitely many geometrically distinct closed geodesics has a long history. Though unsolved in general, it is well understood in the case of surfaces. For surfaces of revolution diffeomorphic to the sphere, a refinement of this problem was introduced by Borzellino, Jordan-Squire, Petrics, and Sullivan. In this article, we quantify their result by counting distinct geodesics of bounded length. In addition, we reframe these results to obtain a couple of characterizations of the round two-sphere.
\end{abstract}
\maketitle

All closed Riemannian manifolds contain a closed geodesic. If the manifold is not simply connected, any length-minimizing representative of a nontrivial homotopy class is a closed geodesic. In the simply connected case, this is already a nontrivial result.

A more difficult question is whether there exist infinitely many closed geodesics. To avoid over-counting, one considers two geodesics \textit{geometrically distinct} if their images are distinct. This brings us to the well known question of whether there exist infinitely many geometrically distinct closed geodesics. In this article, we restrict our attention to surfaces, but we refer the reader to Berger \cite[Chapter XII]{BergerGeometryRevealed} and references therein for a survey on this problem.

For surfaces with genus $g \geq 1$, one uses the infinitude of the  fundamental group and a length minimization argument to construct infinitely many geometrically distinct closed geodesics. For the torus, it follows that the number $N(\ell)$ of such geodesics of length at most $\ell$ grows quadratically in $\ell$. Refining this argument for $g \geq 2$, Katok proved that the number $N(\ell)$ is asymptotically at least $c e^\ell/\ell$ for some constant $c > 0$. 

In the remaining case, when the surface is the sphere, this question was only answered affirmatively in the 1990s by Bangert and Franks \cite{Bangert93,Franks92}. Hingston then proved a quantified version of this result (see \cite{Hingston93}): Given any metric on $\s^2$, the number of geometrically distinct closed geodesics of length at most $\ell$ is asymptotically at least $c \ell /\log \ell$ for some constant $c>0$.

In this article, we consider refinements of these results. As motivation, consider a surface of revolution. Each profile curve connecting the poles extends to a closed geodesic. In particular, the results of Bangert--Franks and Hingston are trivial in this setting. On the other hand, all of these geodesics are in some sense the same. This motivates the following definition: For a closed Riemannian manifold $M$, we say that two geodesics on $M$ are \textit{strongly geometrically distinct} if there is no isometry taking the image of one to the image of the other.

For metrics with finite isometry group, one has immediate analogues of the results above. For metrics with infinite symmetry, it is unclear whether there exist infinitely many strongly geometrically distinct geodesics. For example, the constant curvature metric on $\s^2$ has a unique closed geodesic in this sense. In \cite{BorzellinoEtAl07}, Borzellino et al. prove that all surfaces of revolution diffeomorphic to $\s^2$, except for the round sphere, have infinitely many strongly geometrically distinct geodesics. Our main result is a quantification of this result, as well as a straightforward observation that it extends to all closed, orientable surfaces with continuous (equivalently infinite) symmetry.

\begin{maintheorem}\label{thm:RoundCharacterization}
Let $M$ be an orientable, compact surface with infinite isometry group. Let $N(\ell)$ denote the number of strongly geometrically distinct closed geodesics on $M$ of length less than or equal to $\ell$. One of the following occurs:
	\begin{enumerate}
	\item $M$ is isometric to a round sphere, and $N(\ell) = 1$ for all sufficiently large $\ell > 0$.
	\item There is a constant $c > 0$ such that $N(\ell) \geq c \ell^2$ for all sufficiently large $\ell > 0$.
	\end{enumerate}
\end{maintheorem}

It is well known that $M$ can have infinite isometry group only if $M$ is diffeomorphic to $\s^2$ or the torus $T^2$. In the latter case, a simple extension of a standard argument shows the main theorem holds. However the argument we provide for $\s^2$ carries over with little effort to the case of $T^2$, so we include it in Section \ref{sec:proofT2} for completeness.

Consider now a metric on $\s^2$ with infinite isometry group. The metric takes the form $ds^2 + h(s)^2 d\theta^2$ and one can check that the arguments in Borzellino et al. for a surface of revolution carry over to this slightly more general case to show that infinitely many strongly geometrically distinct closed geodesics exist, i.e., $\lim_{\ell \to \infty} N(\ell) = \infty$. In Section \ref{sec:proofS2}, we summarize their argument and supplement it where needed to prove the claimed lower bound on the growth rate of $N(\ell)$.

Before starting the proof, we point out that this theorem, combined with the work of Hingston and Katok, immediately implies the following:

\begin{nonumbercorollary}\label{cor:RoundCharacterization}
Let $M$ be an orientable, compact surface. Either $M$ is isometric to a round sphere and $N(\ell) = 1$ for all sufficiently large $\ell > 0$, or there exists a constant $c > 0$ such that $N(\ell) \geq c \ell / \log \ell$ for all sufficiently large $\ell>0$.
\end{nonumbercorollary}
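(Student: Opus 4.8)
The plan is to dispatch the corollary by a case analysis on the isometry group of $M$. If $\Isom(M)$ is infinite, then the Main Theorem applies verbatim: either $M$ is isometric to a round sphere and $N(\ell)=1$ for all large $\ell$, which is exactly the first alternative, or $N(\ell)\geq c\ell^2$ for all large $\ell$; since $\ell^2$ eventually exceeds $\ell/\log\ell$, the second alternative follows after replacing $c$ by a smaller constant. So it remains only to treat metrics whose isometry group is finite, say of order $k$. Such an $M$ is certainly not isometric to a round sphere, whose isometry group $\SO(3)$ is infinite, so the goal in this case is just to produce the lower bound $N(\ell)\geq c\ell/\log\ell$ for large $\ell$.

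The key observation for the finite-symmetry case is that ``strongly geometrically distinct'' and ``geometrically distinct'' differ by at most the multiplicative constant $k$: the isometry group acts on the set of closed-geodesic images, each orbit has size at most $k=|\Isom(M)|$, and distinct orbits are exactly distinct classes of strongly geometrically distinct geodesics. Hence, writing $\widetilde N(\ell)$ for the number of geometrically distinct closed geodesics of length at most $\ell$, one has $N(\ell)\geq \widetilde N(\ell)/k$, and it suffices to bound $\widetilde N(\ell)$ from below. For this I would invoke the classical results recalled in the introduction, according to the genus $g$ of the orientable surface $M$. When $g=0$, i.e.\ $M$ is diffeomorphic to $\s^2$, Hingston's theorem gives $\widetilde N(\ell)\geq c\ell/\log\ell$ for large $\ell$. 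When $g=1$, i.e.\ $M$ is a torus, length-minimization in the free homotopy classes together with a bi-Lipschitz comparison to a flat metric gives $\widetilde N(\ell)\geq c\ell^2$. When $g\geq 2$, Katok's theorem gives $\widetilde N(\ell)\geq c\,e^\ell/\ell$. In each case $\widetilde N(\ell)\geq c\ell/\log\ell$ eventually, and dividing by the fixed constant $k$ preserves this bound with a smaller constant.

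There is essentially no obstacle here beyond bookkeeping; the only point that deserves a sentence of justification is the inequality $N(\ell)\geq\widetilde N(\ell)/k$, and the one genuinely nontrivial ingredient, the sphere dichotomy in the presence of continuous symmetry, is precisely the content of the Main Theorem, which we are free to cite. The corollary is thus a short deduction combining the Main Theorem with the theorems of Hingston and Katok.
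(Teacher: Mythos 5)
Your proposal is correct and follows exactly the route the paper intends: the paper states that the corollary ``immediately'' follows from the Main Theorem combined with the results of Hingston and Katok, and your case split on $|\Isom(M)|$, together with the observation that $N(\ell) \geq \widetilde N(\ell)/k$ when the isometry group is finite of order $k$, is precisely the bookkeeping being left to the reader. The only detail worth keeping explicit in a write-up is the orbit-counting justification of that inequality, which you already supply.
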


\subsection*{Acknowledgements}
This project began as part of the Summer Undergraduate Research Fellowship program in the College of Creative Studies at UCSB. The second author is grateful for the support provided by this program. The first author is partially supported by NSF grants DMS-1045292 and DMS-1404670. Both authors would like to thank Wolfgang Ziller for helpful comments in the preparation of this article.

\bigskip\section{Proof of main theorem for the sphere}\label{sec:proofS2}
\bigskip

Suppose $M$ is a Riemannian manifold diffeomorphic to $\s^2$ with infinite isometry group. Since the isometry group of $M$ is a compact Lie group, this implies the existence of an isometric circle action on $M$. Let $\{p,q\} \subseteq M$ denote the fixed point set of this circle action, and choose a minimal geodesic $c$ from $p$ to $q$. By rescaling the metric if necessary, assume that $c$ is defined on $[0,\pi]$ and that $c(0) = p$ and $c(\pi) = q$. There exists a smooth function $h:(0,\pi) \to (0, \infty)$ and an isometric covering map 
	\begin{eqnarray*}
	\sigma\colon \of{(0,\pi) \times \R, ds^2 + h(s)^2 d\theta^2} &\longrightarrow& M \setminus \{p,q\}\\
		(s,\theta)				&\mapsto		& e^{i\theta}\cdot c(s),
	\end{eqnarray*}
where the dot denotes the action of the circle element $e^{i\theta}$ on $c(s)$. Since $M$ is smooth at $p = c(0)$ and $q = c(\pi)$, we conclude that the extended function $h:[0,\pi] \to \R$ satisfies $h(0) = h(\pi) = 0$ and $h'(0) = -h'(\pi) = 1$ (see \cite[Section 1.3.4]{Petersen06}). The strategy now is to follow the proof in Borzellino et al. \cite{BorzellinoEtAl07}, which covers the case of a surface of revolution. Although we are considering a more general class of surfaces, their arguments extend without any changes to our situation. We summarize the argument here since our strategy is simply to supplement it, as needed, in order to prove the main theorem.

In the coordinates induced by $\sigma$, the geodesic equations are
	\begin{eqnarray*}
	s''(t)		&=& h(s(t)) h'(s(t)) \theta'(t)^2,\\
	\theta''(t)	&=& - 2 \frac{h'(s(t))}{h(s(t))} s'(t) \theta'(t).
	\end{eqnarray*}
The meridians, $\gamma(t) = \sigma(t,\theta_0)$, satisfy these equations and extend to closed geodesics passing through both poles, $p$ and $q$. Since $\theta_0$ is arbitrary, we have by uniqueness that meridians are the only geodesics that pass through the poles. In the rest of this section, we consider those geodesics that do not pass through the poles. Since $\sigma$ defines an isometric covering map onto $M \setminus\{p,q\}$, we can write a geodesic $\gamma(t)$ as $\sigma(s(t), \theta(t))$ for smooth functions $s:\R \to (0,\pi)$ and $\theta\colon\R\to\R$. For example, the parallels given by $\gamma(t) = \sigma(s_0, t/h(s_0))$ are closed geodesics provided that $h'(s_0) = 0$.

An important consequence of the geodesic equations is Clairaut's relation. This states that, for each non-meridian geodesic $\gamma$, there exists a positive constant $c_\gamma$ such that
	\[h(s(t)) \cos \alpha(t) = c_\ga,\]
where $\alpha(t)$ is the angle between $\gamma'(t)$ and the coordinate vector field $\sigma_\theta$ at $\gamma(t)$. Since the cosine function is bounded, $h(s(t))$ cannot go to zero, hence any non-meridian curve has its $s$--coordinate bounded by some interval
	\[[s_0(\ga), s_1(\ga)] = [\inf s(t), \sup s(t)] \subseteq (0,\pi).\]
Further analysis shows the following.

\begin{lemma}[Clairaut]\label{lem:Clairaut}
For $a \in (0,\pi)$, let $\gamma_a$ denote a unit-speed geodesic starting with $s$--coordinate $a$ and with initial direction $\gamma'(0)$ in the $\theta$-direction. Exactly one of the following occurs:
	\begin{description}
	\item[1. parallel] $h'(a) = 0$, and $s(t) = a$ for all $t$.
	\item[2. asymptotic] $h'(a) > 0$ (resp. $<0$) and there exists $b = b(a) > a$ (resp. $<a$) such that $h'(b) = 0$ and $s(t) \to b$ as $t \to \infty$.
	\item[3. oscillating] $h'(a) > 0$ (resp. $<0$) and there exists $b = b(a) > a$ (resp. $<a$) such that $h'(b) < 0$ (resp. $>0$) and $s(t)$ oscillates between $a$ and $b$, achieving these extremal values at integral multiples of some time, denoted $T(a)$.
	\end{description}
\end{lemma}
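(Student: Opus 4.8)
The plan is to collapse the geodesic system to a single first-order equation for the $s$-coordinate and then run a phase-line analysis. First I would normalize $\gamma_a$ by its initial data $s(0)=a$, $s'(0)=0$, $\theta'(0)=1/h(a)$ (forced, up to orientation, by unit speed), read off the Clairaut constant $c_{\gamma_a}=h(a)>0$, and substitute $h(s)^2\theta'=h(a)$ back into $(s')^2+h(s)^2(\theta')^2=1$ to obtain
\[
(s'(t))^2 \;=\; 1-\frac{h(a)^2}{h(s(t))^2} \;=:\; f\!\left(s(t)\right),
\]
where $f$ is smooth on $(0,\pi)$, is nonnegative exactly where $h\ge h(a)$, and satisfies $f(a)=0$ and $f'(a)=2h'(a)/h(a)$. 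Differentiating once more, $s$ also solves the smooth second-order equation $s''=\tfrac12 f'(s)$, which is just the first geodesic equation; I would use this form whenever I need ODE uniqueness, since the first-order equation is not Lipschitz at turning points.

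\textbf{Case 1} is immediate: if $h'(a)=0$ then $(s,\theta)=(a,\,t/h(a))$ solves both geodesic equations with the prescribed initial data, so uniqueness gives $s\equiv a$, and this case is plainly disjoint from the other two. For the remaining cases, the reflection $s\mapsto\pi-s$ is an isometry that preserves the boundary conditions on $h$, so I may assume $h'(a)>0$; then $s'(0)=0$ and $s''(0)=\tfrac12 f'(a)=h'(a)/h(a)>0$, and $s$ is even in $t$ (by time-reversal and uniqueness), so it suffices to study $t\ge 0$. Since $h(a)>0=h(\pi)$ and $h>h(a)$ just to the right of $a$, the intermediate value theorem gives a first point $b=b(a)\in(a,\pi)$ with $h(b)=h(a)$; then $f>0$ on $(a,b)$ and $h'(b)\le 0$.

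Now I would analyze the motion of $s$ on $t\ge 0$: as long as $s(t)\in(a,b)$ we have $f(s(t))>0$, so $s'\ne 0$ and $s$ strictly increases; being bounded above by $b$, it either reaches $b$ in finite time or tends to a limit $L\le b$, and $L<b$ is impossible since then $s'(t)\to\sqrt{f(L)}>0$. The dichotomy between the two remaining cases is dictated by the order of vanishing of $f$ at $b$, where $f'(b)=2h'(b)/h(a)$. If $h'(b)<0$, then $b$ is a simple zero, $\sqrt{f(s)}\sim C\sqrt{b-s}$, so $T(a):=\int_a^b ds/\sqrt{f(s)}<\infty$: the coordinate $s$ reaches $b$ at time $T(a)$ with $s'(T(a))=0$ and $s''(T(a))=\tfrac12 f'(b)<0$, a strict maximum; reflecting the autonomous equation $s''=\tfrac12 f'(s)$ about $t=T(a)$ and about $t=0$ then shows $s$ is $2T(a)$-periodic and oscillates monotonically between $a$ and $b$, attaining these values exactly at integer multiples of $T(a)$ — the oscillating case. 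If instead $h'(b)=0$, then $f(b)=f'(b)=0$, so $f(s)\le C(b-s)^2$ near $b$; hence $|s'|\le\sqrt{C}\,|b-s|$, and a Gr\"onwall estimate forces $b-s(t)$ to decay at most exponentially, so $s(t)<b$ for all $t$ and $s(t)\to b$ — the asymptotic case. Since $h'(b)$ cannot be positive, exactly one of these occurs, which together with Case 1 gives the trichotomy.

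I expect the only real work to be in the last paragraph: the finite-time-versus-asymptotic comparison near $b$ (handled by the convergence or divergence of $\int ds/\sqrt{f(s)}$, respectively by Gr\"onwall), and the deduction of \emph{exact} periodicity in the oscillating case from uniqueness for the smooth second-order equation together with the two reflection symmetries. The reductions in the first two paragraphs are routine.
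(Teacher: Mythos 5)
Your argument is correct. Note that the paper does not actually prove this lemma: it states it as the outcome of ``further analysis'' and defers to Borzellino et al.\ \cite{BorzellinoEtAl07}, where essentially the same reduction is carried out. Your write-up is a complete, self-contained version of that standard analysis: Clairaut's relation collapses the system to $(s')^2 = 1 - h(a)^2/h(s)^2$, and the trichotomy is read off from the order of vanishing of the right-hand side at the first return level $b$ with $h(b)=h(a)$ (simple zero giving finite transit time $\int_a^b ds/\sqrt{f}$ and hence oscillation, double zero giving at-most-exponential approach and hence the asymptotic case). The two points you flag as the real work --- the convergence-versus-divergence comparison at $b$, and deducing exact $2T(a)$-periodicity from uniqueness for the smooth autonomous equation $s''=\tfrac12 f'(s)$ together with the reflections about $t=0$ and $t=T(a)$, rather than from the non-Lipschitz first-order equation --- are exactly the right places to be careful, and you handle both correctly.
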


According to this result, we refer to the parameter $a \in (0,\pi)$ as parallel, asymptotic, or oscillating. Following \cite[Proposition 3.1]{BorzellinoEtAl07}, we let $U \subseteq (0,\pi)$ denote the subset consisting of oscillating $a \in (0,\pi)$ for which $h'(a) > 0$ and $h'(b(a)) < 0$, where $b(a) = \inf \{b > a \st h(b) = h(a)\}$. Geometrically, the $s$--coordinate of $\gamma_a$ oscillates between $a$ and $b(a)$. It follows that $U \subseteq (0,\pi)$ is an open set and that the function $a \mapsto b(a)$ on $U$ is smooth. Indeed, this function is given by $h$ composed with a local inverse of $h$, and so it is smooth by the inverse function theorem.

For each $a \in U$, write $\gamma_a(t) = \sigma(s(t), \theta(t))$ and define
	\begin{eqnarray*}
	R(a)	&=&	2 \int_0^{T(a)} \theta'(t) dt,\\
	L(a)	&=& 2 \int_0^{T(a)} \sqrt{s'(t)^2 + h(s(t))^2 \theta'(t)^2} dt.
	\end{eqnarray*}
This defines two functions $R : U \to \R$ and $L: U \to \R$. The geometric interpretation of these functions is as follows. The quantity $2T(a)$ denotes the time required for a geodesic starting at $s = a$ and parallel to $\sigma_\theta$ to have its $s$--coordinate go to $b(a)$ and back to $a$. We call this a ``full trip''. It then follows by symmetry that $R(a)$ and $L(a)$ denotes the total rotation and length of the geodesic on a full trip. In \cite{BorzellinoEtAl07}, the authors prove that $R(a)$ is a continuous function of $a$. For our purposes, we also need that $L(a)$ is continuous.

\begin{lemma}
The functions $L, R:U \to \R$ are continuous.
\end{lemma}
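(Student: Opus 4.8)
The plan is to reduce both functions to a single one-variable integral and then desingularize it. Continuity of $R$ is \cite[Proposition 3.1]{BorzellinoEtAl07}, so the real content is $L$. The first step is the observation that, since $\gamma_a$ is unit speed, the integrand $\sqrt{s'(t)^2 + h(s(t))^2\theta'(t)^2}$ in the definition of $L(a)$ is identically $1$; hence $L(a) = 2T(a)$, and it suffices to prove that $a \mapsto T(a)$ is continuous on $U$.

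The second step is to write $T(a)$ as an explicit integral. Clairaut's relation, together with unit speed and the fact that $\gamma_a'(0)$ points in the $\theta$-direction, gives $h(s(t))^2\theta'(t) = h(a)$ and $s'(t)^2 = 1 - h(a)^2/h(s(t))^2$ along $\gamma_a$. On the rising quarter-trip the $s$-coordinate increases monotonically from $a$ to $b(a)$, so substituting $t \mapsto s$ yields
\[
T(a) = \int_a^{b(a)} \frac{h(s)\,ds}{\sqrt{h(s)^2 - h(a)^2}}.
\]
This integral is improper at both endpoints: near $s = a$ one has $h(s)^2 - h(a)^2 \sim 2h(a)h'(a)(s-a)$ with $h'(a) > 0$, and near $s = b(a)$ one has (using $h(b(a)) = h(a)$) that $h(s)^2 - h(a)^2 \sim 2h(a)h'(b(a))(s-b(a))$ with $h'(b(a)) < 0$; in each case the singularity is of integrable type $|s-\cdot|^{-1/2}$, and the only issue is continuity in $a$.

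To handle this, I would fix $a_0 \in U$, choose $\delta > 0$ small enough that $h' > 0$ on a neighborhood of $a_0$ and $h' < 0$ on a neighborhood of $b(a_0)$ (possible since $a_0 \in U$ and $b$ is smooth on $U$, hence $b(a)$ is close to $b(a_0)$ for $a$ near $a_0$), and split $T(a) = \int_a^{a_0+\delta} + \int_{a_0+\delta}^{b(a_0)-\delta} + \int_{b(a_0)-\delta}^{b(a)}$. On the middle interval the integrand extends to a jointly continuous, bounded function of $(s,a)$, since there $\min h$ exceeds $h(a_0) = \lim_{a\to a_0} h(a)$, so that term is continuous in $a$. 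On the left interval, where $h$ is a diffeomorphism, I would substitute $v = h(s)$ and then $w = \sqrt{v^2 - h(a)^2}$; a short computation turns $\int_a^{a_0+\delta} \frac{h(s)\,ds}{\sqrt{h(s)^2-h(a)^2}}$ into the proper integral $\int_0^{W(a)} \frac{dw}{h'\!\left(h^{-1}(\sqrt{w^2 + h(a)^2})\right)}$, whose integrand is continuous in $(w,a)$ near $a_0$ and whose upper limit $W(a) = \sqrt{h(a_0+\delta)^2 - h(a)^2}$ is continuous in $a$; hence that term is continuous. The right interval is treated identically with the local inverse of $h$ near $b(a_0)$, the identity $h(b(a)) = h(a)$ making the new lower limit equal to $0$. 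Summing the three continuous pieces shows $T$ is continuous at $a_0$, hence on $U$, so $L = 2T$ is continuous; the same two-step substitution applied to $R(a) = 2\int_a^{b(a)} \frac{h(a)\,ds}{h(s)\sqrt{h(s)^2 - h(a)^2}}$ re-derives the continuity of $R$.

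The main obstacle is purely the uniform control of the two endpoint singularities as $a$ varies — i.e., carrying out the desingularizing change of variables simultaneously for a whole range of parameters $a$ — and the two-step substitution above is precisely the device that removes it, replacing each improper integral by a proper one whose integrand and limits of integration depend continuously on $a$.
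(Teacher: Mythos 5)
Your argument is correct, and it reaches the same starting point as the paper --- the formula $L(a) = 2\int_a^{b(a)} h(s)\,ds/\sqrt{h(s)^2 - h(a)^2}$ --- but then diverges in method. The paper truncates the improper integral to $L_\delta(a) = 2\int_{a+\delta}^{b(a)-\delta}$, observes that each $L_\delta$ is smooth by the Leibniz rule, and proves uniform convergence $L_\delta \to L$ on a compact subinterval of $U$ by dominating the endpoint tails (via the bound $l(a,s) \le \tfrac{1}{2c_1}\cdot\tfrac{2h(s)h'(s)}{\sqrt{h(s)^2-h(a)^2}}$ and the substitution $y = h(s)^2 - h(a)^2$); continuity of $L$ then follows as a uniform limit of continuous functions. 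You instead split the domain into three pieces and fully desingularize the two endpoint pieces with the two-step substitution $v = h(s)$, $w = \sqrt{v^2 - h(a)^2}$, converting each into a proper integral $\int_0^{W(a)} dw/h'\bigl(h^{-1}(\sqrt{w^2+h(a)^2})\bigr)$ with integrand and limits jointly continuous in $a$; the computation checks out (note $w\,dw = v\,dv$ cancels the $v$ in the numerator), and the hypotheses you need --- $h'$ bounded away from $0$ near $a_0$ and near $b(a_0)$, continuity of $a \mapsto b(a)$, and $\min h > h(a_0)$ on the middle compact interval --- are all available since $a_0 \in U$. Your preliminary observation that $L(a) = 2T(a)$ by unit speed is a clean shortcut the paper does not make explicit. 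The trade-off: your route avoids any uniform-convergence argument and makes continuity manifest term by term, at the cost of tracking a variable-limit decomposition and the joint continuity of the transformed integrand; the paper's route is slightly more robust in that the same tail estimate, being uniform over a whole interval of parameters, handles $R$ and $L$ with one stroke and would also yield higher regularity statements if desired. Both proofs are essentially powered by the same substitution $h(s)^2 - h(a)^2$, used once as an estimate and once as an exact change of variables.
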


\begin{proof}
The proofs for $R$ and $L$ are similar, so we only prove it for $L$. Fix $a \in U$. Choose an non-trivial interval $[a_1,a_2] \subseteq U$ containing $a$  on which $h' \geq c_1 > 0$. We prove now that $L$ is continuous on $[a_1, a_2]$.

The geodesic equations imply that 
	$L(a) = 2 \int_a^{b(a)} l(a,s) ds$, where $l(a,s)$ is given by $h(s)/\sqrt{h(s)^2 - h(a)^2}$. 
This integral is improper at both endpoints, so we proceed by proving the following two claims:
	\begin{enumerate}
	\item For all sufficiently small $\de > 0$, $L_\de(a) = 2 \int_{a+\de}^{b(a) - \de} l(a,s)ds$ is smooth.
	\item The functions $L_\de$ converge uniformly to $L$ on $[a_1,a_2]$.
	\end{enumerate}
The first claim follows from the Leibniz integral rule since $l(a,s)$ is a smooth function on the set 
$\{(a,s)| a\in[a_1, a_2], a+\delta\leq s\leq b(a)-\delta\}$. 
To prove the second claim, it suffices to prove that
	$\int_{a}^{a+\de} l(a,s)ds \to 0$ and  $\int_{b(a)-\de}^{b(a)} l(a,s) ds \to 0$
uniformly in $a \in [a_1, a_2]$ as $\de$ goes to $0$. These claims are proven similarly, so we only prove the first. The second only requires the additional fact that $b(a)$ depends smoothly on $a$.

Observe that $l(a,s)$ is non-negative and bounded above as
	\[l(a,s) = \frac{h(s)}{\sqrt{h(s)^2 - h(a)^2}} \leq \frac{1}{2c_1}   \frac{2 h(s) h'(s)}{\sqrt{h(s)^2 - h(a)^2}}.\]
Integrating this expression and applying the change of variables $y = h(s)^2 - h(a)^2$, we conclude that 
	\[\int_a^{a+\de} l(a,s) ds
	\leq \frac{1}{2 c_1} \int_{0}^{h(a+\de)^2 - h(a)^2} \frac{dy}{\sqrt{y}}
	= \frac{\sqrt{h(a+\de)^2 - h(a)^2}}{c_1}.\]
Since $h$ is smooth and hence uniformly continuous on $[0,\pi]$, this last quantity converges to $0$ uniformly in $a$ as $\de \to 0$. This completes the proof.
\end{proof}

We proceed to the proof of the main theorem, that the number $N(\ell)$ of strongly geometrically distinct closed geodesics grows quadratically in $\ell$. The idea is to show, for all large $\ell > 0$, that a large number of values of $a$ exist such that $a \in U$, $R(a) = 2\pi \frac{p}{q}$ for some rational $\frac p q$, and $L(a) \leq \ell/q$. These three conditions imply that any choice of $\gamma_a$ as in Lemma \ref{lem:Clairaut} is oscillating, closes up after $q$ ``full trips'', and is a closed geodesic with length at most $\ell$. 

Since $M$ is diffeomorphic to $\s^2$, the isometry group cannot be two-dimensional. We may assume without loss of generality that the isometry group is one-dimensional. By compactness, it is simply a finite cover of a circle. In particular, for each value of $a$ as above, at most finitely many other such values result in geodesics that are not strongly geometrically distinct from $\gamma_a$. This issue results in a multiplicative factor (equal to the finite number of components in the isometry group) in our estimates. Since the main theorem involves an unknown multiplicative constant anyway, we simply assume, without loss of generality, that the isometry group equals the circle.

The proof is carried out in three cases, which are based roughly on the setup in \cite{BorzellinoEtAl07}. One key step is to prove that there exists an asymptotic geodesic if $h$ has more than one critical point. This actually need not be the case. Indeed, a capped cylinder provides a counterexample, since every critical point is a local maximum and hence not a limiting value of an asymptotic geodesic. This problem is easy to fix however. Indeed, we break the proof into the following cases:

\begin{lemma}
If $h$ has infinitely many critical points, then $N(\ell) = \infty$ for all sufficiently large $\ell > 0$.
\end{lemma}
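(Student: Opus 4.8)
The plan is to produce the required closed geodesics directly from the parallels. Recall from the discussion preceding Lemma~\ref{lem:Clairaut} that whenever $s_0 \in (0,\pi)$ satisfies $h'(s_0) = 0$, the curve $\gamma(t) = \sigma(s_0, t/h(s_0))$ is a unit-speed closed geodesic whose image is the parallel circle $P_{s_0} := \sigma(\{s_0\}\times\R)$ and which closes up after time $2\pi h(s_0)$. In particular the length of $P_{s_0}$ is $2\pi h(s_0) \le 2\pi \max_{[0,\pi]} h =: \ell_0$, a bound that does not depend on $s_0$.

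First I would note that distinct critical values of $s$ yield genuinely distinct parallels: since $\sigma$ is a covering map onto $M \setminus \{p,q\}$ whose group of deck transformations is generated by $\theta \mapsto \theta + 2\pi$, the $s$--coordinate is a well-defined function on $M \setminus \{p,q\}$, so $P_{s_0}$ and $P_{s_1}$ are disjoint whenever $s_0 \neq s_1$. Next I would check that no isometry of $M$ carries one of these parallels onto another. By the reduction already made, we may assume the isometry group of $M$ is exactly the chosen circle, which acts in the coordinates induced by $\sigma$ by $(s,\theta) \mapsto (s, \theta + \phi)$; every isometry therefore preserves the $s$--coordinate and maps each $P_{s_0}$ to itself. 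Hence, for critical points $s_0 \ne s_1$, the geodesics $P_{s_0}$ and $P_{s_1}$ are strongly geometrically distinct.

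It then follows that if $h$ has infinitely many critical points in $(0,\pi)$, the corresponding parallels form an infinite family of pairwise strongly geometrically distinct closed geodesics, each of length at most $\ell_0$; consequently $N(\ell) = \infty$ for every $\ell \ge \ell_0$, which is all that is claimed. I do not expect any serious obstacle here: the only point that deserves care is the assumption that the isometry group is exactly the circle (so that isometries preserve the $s$--coordinate), and this has already been arranged above. Even without that normalization one could argue directly that the $s$--coordinate equals the distance to $p$ on $M \setminus \{p,q\}$, so that any isometry permutes the level sets of $s$ via $s \mapsto s$ or $s \mapsto \pi - s$; in either case the family of parallels splits into infinitely many classes of size at most two, which still gives $N(\ell) = \infty$.
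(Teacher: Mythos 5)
Your proof is correct and follows essentially the same route as the paper's: both use the parallels at critical points of $h$ as an infinite family of strongly geometrically distinct closed geodesics of uniformly bounded length $2\pi\max h$. You simply spell out in more detail why isometries preserve (the level sets of) the $s$--coordinate, which the paper asserts in one line.
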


\begin{proof}
If $h'(a) = 0$, then $\gamma_a(t) = \sigma(a, t/h(a))$ is a closed geodesic of length $2\pi h(a)$. Moreover, the image of $\gamma_a$ maps to itself under any isometry, so distinct values of $a$ yield strongly geometrically distinct closed geodesics. The result follows since $h$ is bounded on $[0,\pi]$.
\end{proof}

\begin{lemma}
If $h$ has finitely many critical points, and $R$ is locally constant, then $N(\ell) = \infty$ for all sufficiently large $\ell > 0$.
\end{lemma}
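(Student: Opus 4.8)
The plan is to find, near the parameter $a = 0$, a whole interval of values $a$ for which $\gamma_a$ is a closed geodesic of controlled length, and to observe that such geodesics are automatically pairwise strongly geometrically distinct. The first point is that $(0,\epsilon)\subseteq U$ for some $\epsilon > 0$: choose $\delta\in(0,\pi/2)$ small enough that $h' > 0$ on $(0,\delta)$ and $h' < 0$ on $(\pi-\delta,\pi)$; since $h$ is bounded below by a positive constant on $[\delta,\pi-\delta]$, for all sufficiently small $a > 0$ the first return $b(a) = \inf\{b > a \st h(b) = h(a)\}$ lies in $(\pi-\delta,\pi)$, where $h' < 0$. Hence $\gamma_a$ is oscillating with $h'(a) > 0$ and $h'(b(a)) < 0$, i.e.\ $a\in U$, and moreover $b(a)\to\pi$ as $a\to 0^+$. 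Since $R$ is continuous and locally constant, it is constant on the connected set $(0,\epsilon)$, and that constant equals $\lim_{a\to 0^+}R(a)$.

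The crux is the computation $\lim_{a\to 0^+}R(a) = 2\pi$. Using Clairaut's relation and unit speed, $\theta'(t) = h(a)/h(s(t))^2$, so $R(a) = 2h(a)\int_a^{b(a)}\frac{ds}{h(s)\sqrt{h(s)^2 - h(a)^2}}$. I would split this integral, for a fixed small $\delta > 0$, into the ranges $[a,\delta]$, $[\delta, b(a)-\delta]$, and $[b(a)-\delta, b(a)]$. On $[a,\delta]$ the substitution $u = h(s)/h(a)$ turns the corresponding piece into $2\int_1^{h(\delta)/h(a)}\frac{du}{u\sqrt{u^2-1}\,h'(s)}$, which, since $h'$ is continuous with $h'(0) = 1$ and the upper limit tends to $\infty$, converges as first $a\to 0^+$ and then $\delta\to 0$ to $2\int_1^\infty \frac{du}{u\sqrt{u^2-1}} = \pi$. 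The piece over $[b(a)-\delta, b(a)]$ is handled identically using $h(\pi) = 0$, $h'(\pi) = -1$ and $b(a)\to\pi$, and also contributes $\pi$; the middle piece tends to $0$ because $h$ is bounded below on $[\delta,\pi-\delta]$ while $h(a)\to 0$. (Geometrically this just says that a geodesic arc making a near-polar approach at a smooth pole sweeps a total angle tending to $\pi$, and $\gamma_a$ does this at each of the two poles.) Therefore $R\equiv 2\pi$ on $(0,\epsilon)$.

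It now follows that for every $a\in(0,\epsilon)$ the geodesic $\gamma_a$ accumulates total rotation exactly $2\pi$ over a single full trip, hence closes up after that full trip and is a closed geodesic of length $L(a)$. Fixing a compact subinterval $[a_1,a_2]\subseteq(0,\epsilon)$, continuity of $L$ yields a constant $L_0$ with $L(a)\le L_0$ on $[a_1,a_2]$. Under the standing assumption $\Isom(M) = S^1$, every isometry acts on $M\setminus\{p,q\}$ as $(s,\theta)\mapsto(s,\theta+\phi)$ and therefore preserves the $s$-coordinate; since $\im(\gamma_a)$ attains minimal $s$-coordinate exactly $a$, no isometry carries $\im(\gamma_a)$ onto $\im(\gamma_{a'})$ when $a\ne a'$. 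Thus $\{\gamma_a \st a\in[a_1,a_2]\}$ is an infinite family of pairwise strongly geometrically distinct closed geodesics of length at most $L_0$, and $N(\ell) = \infty$ for all $\ell \ge L_0$. I expect the main obstacle to be the limit $\lim_{a\to 0^+}R(a) = 2\pi$: it is essential that this limit be not merely finite but a rational multiple of $2\pi$ (indeed $2\pi$), for otherwise $\gamma_a$ would fail to close; the estimate itself is routine but must be carried out with some care at the two improper endpoints, exactly in the spirit of the preceding continuity proof for $L$.
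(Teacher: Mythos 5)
Your proof is correct, and it takes a genuinely different (and arguably cleaner) route than the paper. The paper splits into two cases according to whether $h$ has one or several critical points: with several, it invokes the corollaries of Borzellino et al.\ to produce an asymptotic geodesic, concludes $R$ is unbounded on $U$, and derives a contradiction with local constancy; with a unique critical point $s_0$, it identifies $U=(0,s_0)$ and cites $R\equiv\lim_{a'\to 0}R(a')=2\pi$, then finishes exactly as you do via continuity of $L$ on a compact subinterval. Your observation that $(0,\epsilon)\subseteq U$ holds \emph{unconditionally} once $h'\neq 0$ near the endpoints (which is all that ``finitely many critical points'' is really used for) lets you skip the case division and the asymptotic-geodesic machinery entirely, at the cost of carrying out the limit computation $\lim_{a\to 0^+}R(a)=2\pi$ yourself rather than citing it; your splitting of the integral into the two polar pieces contributing $\pi$ each and a vanishing middle piece is the standard argument and is sound, though as you note the interchange of the limits in $a$ and $\delta$ should be handled by bounding $h'$ between $1\pm\eta(\delta)$ on $[0,\delta]$ and squeezing. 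Your justification that distinct $a$ yield strongly geometrically distinct geodesics (isometries preserve the $s$--coordinate, and $a$ is the minimal $s$--value of $\im\gamma_a$) is also the right argument and is more explicit than what the paper records. What the paper's approach buys is brevity on the page by leaning on the cited corollaries; what yours buys is a self-contained proof with one fewer case.
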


\begin{proof}
In this case, the argument in \cite[Corollaries 4.4 and 4.5]{BorzellinoEtAl07} is valid since the critical points are isolated. Indeed, first suppose that $h$ has more than one critical point. The arguments there show that $M$ has an asymptotic geodesic and hence that $R$ is unbounded on $U$. However Lemma \ref{lem:Clairaut} and the assumptions of this lemma imply that $R$ takes on only finitely many values, so this is a contradiction. Assume instead that $h$ has a unique critical point, $s_0$. It follows as in \cite[Corollary 5.4]{BorzellinoEtAl07} that $U = (0,s_0)$ and that $R(a) = \lim_{a' \to 0} R(a') = 2 \pi$ for all $a \in (0, s_0)$. But $L$ is continuous on $(0,s_0)$ and hence on $[\frac{s_0}{3},\frac{s_0}{2}]$, so there exist infinitely many strongly geometrically distinct closed geodesics of length at most $L_0$ where $L_0 = \max\{L(s) \st s \in [\frac{s_0}{3}, \frac{s_0}{2}]\} < \infty$. 
\end{proof}

\begin{lemma}
If $h$ has finitely many critical points and $R$ is not locally constant, then there exists a constant $c > 0$ such that $N(\ell) \geq c\ell^2$ for all sufficiently large $\ell > 0$.
\end{lemma}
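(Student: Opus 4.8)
The plan is to exhibit, for each large $\ell$, a family of strongly geometrically distinct closed geodesics of length at most $\ell$ that is indexed by reduced fractions of bounded denominator, and then to apply the classical quadratic count of such fractions. First I would fix the geometry. Since $R$ is continuous and not locally constant on $U$, there is a closed interval $[a_1,a_2]\subseteq U$ on which $R$ is non-constant. Being the continuous image of a compact connected set, $R([a_1,a_2])$ is a closed interval $[\mu_1,\mu_2]$ with $\mu_1<\mu_2$; moreover $\mu_1>0$, since by Clairaut's relation $\theta'$ has constant sign along each $\gamma_a$ (positive, with our conventions), so $R>0$ on $U$. By the preceding lemma, $L$ is continuous on the compact set $[a_1,a_2]$, hence bounded there by some $L_0<\infty$. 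Put $Q=Q(\ell)=\lfloor \ell/L_0\rfloor$, so that $Q\to\infty$ as $\ell\to\infty$.

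Next I would run the construction. Let $p/q$ be a fraction in lowest terms with $1\le q\le Q$ and $2\pi\,p/q\in[\mu_1,\mu_2]$. By the intermediate value theorem choose $a=a(p/q)\in[a_1,a_2]$ with $R(a)=2\pi\,p/q$. Since $a\in U$, the geodesic $\gamma_a$ of Lemma~\ref{lem:Clairaut} is oscillating; at the end of each full trip it is back at $s=a$ with unit velocity in the $+\theta$-direction, having rotated by exactly $R(a)$ in $\theta$. Hence after $q$ full trips it has rotated by $2\pi p$ and returns to its initial position and velocity, so it is a closed geodesic, of length $qL(a)\le qL_0\le QL_0\le\ell$. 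Distinct reduced fractions produce distinct values of $R(a)$ and hence distinct parameters $a$; and because we have reduced to $\Isom(M)=\s^1$ acting by rotations in $\theta$, every isometry of $M$ preserves the $s$-coordinate, so $\inf_t s(\gamma_a(t))=a$ is an isometry invariant of the image of $\gamma_a$. Therefore distinct reduced fractions yield strongly geometrically distinct closed geodesics of length at most $\ell$, so $N(\ell)$ is at least the number of such fractions.

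Finally I would count. The number of pairs $(p,q)$ with $1\le q\le Q$, $\gcd(p,q)=1$, and $\mu_1/2\pi\le p/q\le\mu_2/2\pi$ is, by a standard estimate (equivalently, by the asymptotic $\sum_{q\le Q}\varphi(q)\sim\tfrac{3}{\pi^2}Q^2$ together with an elementary sieve bounding the count of admissible numerators for each $q$), at least $c_0Q^2$ for some $c_0>0$ and all sufficiently large $Q$. Combining the two estimates, $N(\ell)\ge c_0Q^2\ge c_0(\ell/L_0-1)^2\ge c\,\ell^2$ for all sufficiently large $\ell$, which is the claim.

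The main obstacle is not any single estimate but the interface between the three ingredients: one must check that a rational value of $R/2\pi$ with denominator at most $Q$ genuinely produces a \emph{closed} geodesic whose length does not exceed $\ell$ (this is where the precise description of a ``full trip'' and the boundedness of $L$ are used), and one needs a lower bound of the correct order on the number of reduced fractions in a fixed interval (a cruder, prime-based count would only give $Q^2/\log Q$, which is why the totient asymptotic is needed). The delicate geometric point is that the geodesics so obtained are pairwise \emph{strongly} geometrically distinct, for which the key observation is that the minimum of the $s$-coordinate is an invariant of the rotational isometry group; had $\Isom(M)$ possessed additional components one would lose only a bounded multiplicative factor, as already noted in the text.
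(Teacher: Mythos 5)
Your proposal is correct and follows essentially the same route as the paper: restrict to a compact subinterval of $U$ on which $R$ is non-constant, bound $L$ there by $L_0$, realize each rational $2\pi p/q$ in the image of $R$ with $q\leq\lfloor\ell/L_0\rfloor$ as a closed geodesic of length $qL(a)\leq\ell$, and invoke the Farey/totient count of such fractions. Your explicit justification that distinct parameters $a$ give strongly geometrically distinct geodesics (via invariance of $\inf_t s$ under the circle action) is a point the paper handles earlier in its reduction to $\Isom(M)=\sone$ rather than inside this proof, but it is the same idea.
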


\begin{proof}
Choose a closed interval $I' \subseteq U$ that is mapped by $R$ to some non-trivial interval $I \subseteq \R$. Let $2\pi \frac p q \in I$. Each $a \in U$ that is mapped by $R$ to $2 \pi \frac p q$ corresponds to a closed geodesic of length $q L(a)$.  Since $L$ is continuous on $I'$, this length is at most $q L_0$, where $L_0$ is the maximum value of $L$ on $I'$. This length is at most $\ell$ if and only if $q \leq \floor{\ell/L_0}$. To estimate $N(\ell)$ from below, it suffices to count the number of rationals $\frac p q \in \frac{1}{2\pi}I$ with $q \leq \floor{\ell/L_0}$. By Lemma \ref{lem:counting} below, there is a constant $c'$ such that the number of such rationals is at least $c' \of{\floor{\ell/L_0}}^2$ for all sufficiently large $\ell$. Taking $c = \frac 1 2 c' /L_0^{2}$, we conclude that $N(\ell) \geq c \ell^2$ for all sufficiently large $\ell > 0$.
\end{proof}

As indicated in the previous proof, it suffices to prove the following counting lemma.

\begin{lemma}\label{lem:counting}
Inside any connected, non-trivial interval $I \subseteq \R$, there exist constants $c > 0$ and $n_0 \in \N$ such that for all $n \geq n_0$, there are at least $c n^2$ rational numbers in $I$ with denominator at most $n$.
\end{lemma}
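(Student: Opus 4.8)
The plan is to reduce this to a standard density estimate for reduced (Farey) fractions. Write $I = [\al,\be]$ and set $\lambda := \be - \al > 0$. Since a rational number can be written with denominator at most $n$ precisely when its denominator in lowest terms is at most $n$, it suffices to bound from below
$A(n) := \#\{p/q : 1 \le q \le n,\ \gcd(p,q) = 1,\ p/q \in I\}$,
and this number equals $\sum_{q=1}^{n} N_q$, where $N_q := \#\{p \in \Z : \gcd(p,q) = 1,\ p/q \in I\}$.

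First I would estimate $N_q$ from below by inclusion--exclusion over the prime divisors of $q$. The integers $p$ with $p/q \in I$ are exactly the integers in the interval $qI$, which has length $\lambda q$; inserting $\mathbf 1_{\gcd(p,q)=1} = \sum_{d \mid \gcd(p,q)} \mu(d)$ and counting multiples of each $d$ gives $N_q = \sum_{d \mid q} \mu(d)\bigl(\tfrac{\lambda q}{d} + \theta_d\bigr)$ for suitable $\theta_d$ with $|\theta_d| \le 1$, hence
\[
N_q \;\ge\; \lambda q \sum_{d \mid q}\frac{\mu(d)}{d} \;-\; \sum_{d \mid q}|\mu(d)| \;=\; \lambda\,\varphi(q) - 2^{\omega(q)},
\]
where $\varphi$ is Euler's totient and $\omega(q)$ is the number of distinct primes dividing $q$. (For small $q$ the right-hand side is negative and the bound is vacuous, which does no harm.)

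Summing over $q \le n$ gives $A(n) \ge \lambda \sum_{q\le n}\varphi(q) - \sum_{q\le n} 2^{\omega(q)}$. For the main term I would invoke the classical estimate $\sum_{q\le n}\varphi(q) = \tfrac{3}{\pi^2}n^2 + O(n\log n)$, which follows from $\varphi = \mu * \id$ via $\sum_{q\le n}\varphi(q) = \sum_{d\le n}\mu(d)\sum_{m\le n/d} m = \tfrac{n^2}{2}\sum_{d\le n}\tfrac{\mu(d)}{d^2} + O(n\log n)$ together with $\sum_{d\ge 1}\mu(d)/d^2 = 1/\zeta(2) = 6/\pi^2$; in fact only the positivity of this constant is needed. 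For the error term, $2^{\omega(q)} \le d(q)$ (the divisor function), so $\sum_{q\le n}2^{\omega(q)} \le \sum_{q\le n}d(q) = \sum_{d\le n}\floor{n/d} \le n(1+\log n)$. Combining, $A(n) \ge \tfrac{3\lambda}{\pi^2}n^2 + O(n\log n)$, so $A(n) \ge \tfrac{\lambda}{\pi^2}n^2$ once $n$ exceeds some $n_0$ depending on $\lambda$; this is the assertion with $c = \lambda/\pi^2$.

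The only genuine input is the lower bound $\sum_{q\le n}\varphi(q) \gg n^2$ — equivalently, that a positive proportion of the fractions $p/q$ with $q \le n$ are already in lowest terms — which I expect to be the one step a reader would want cited or recalled, everything else being bookkeeping. I would flag that a naive double count does \emph{not} close: for each $q \in (n/2,n]$ there are about $\lambda q$ fractions $p/q$ in $I$, totalling $\sim \tfrac38 \lambda n^2$, but the repeated fractions — those of small denominator — carry a comparable amount of mass, so one must count reduced fractions directly, as above.
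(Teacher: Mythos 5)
Your proof is correct, and it takes a recognizably different route from the paper's, though both ultimately rest on the same arithmetic input, namely $\sum_{q\le n}\varphi(q)\gg n^2$. The paper counts the Farey fractions $F_m$ in $[0,1]$ (whose cardinality is exactly $1+\sum_{k\le m}\varphi(k)$, bounded below via Walfisz), picks a subinterval $\left[\frac ab,\frac{a+1}{b}\right]\subseteq I$, and transplants $F_{\lfloor n/b\rfloor}$ into $I$ by the affine injection $x\mapsto\frac{a+x}{b}$; the price is a constant $c$ that degrades like $b^{-2}$, depending on which fraction happens to sit inside $I$. You instead count reduced fractions in $I$ directly, denominator by denominator, using M\"obius inversion to get $N_q\ge\lambda\varphi(q)-2^{\omega(q)}$ and then summing; the error term is absorbed by the elementary divisor-sum bound. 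Your bookkeeping checks out: the identity $\sum_{d\mid q}\mu(d)/d=\varphi(q)/q$, the bound $2^{\omega(q)}\le d(q)$ with $\sum_{q\le n}d(q)\le n(1+\log n)$, and the observation that negative lower bounds for individual $N_q$ do no harm are all fine. What your approach buys is a cleaner and essentially optimal constant $c=\lambda/\pi^2$ depending only on the length of $I$ rather than on a rational point inside it, at the cost of redoing the totient-sum asymptotic in a localized form; the paper's version outsources all the number theory to a single cited count and needs only an injection on top of it. Your closing remark that a naive count of all (not necessarily reduced) fractions fails is a worthwhile sanity check that the paper does not make explicit.
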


\begin{proof}
The proof uses Farey fractions. Let $F_n$ denote the set of rationals $a/b$ written in reduced form such that $0 \leq a \leq b \leq n$. It is easy to see that the number of elements in $F_n$ satisfies
	\[|F_n| =1 + \sum_{k=1}^n \phi(k),\]
where $\phi(k)$ is the Euler totient function, given by the number of integers $1 \leq i \leq k$ coprime to $k$. According to Walfisz \cite{Walfisz63}, 
	\[\sum_{k=1}^n \phi(k) = \frac{3}{\pi^2} n^2 + \mathrm{O}\of{n \of{\log n}^{2/3} \of{\log\log n}^{4/3}}.\]
In particular, it follows that constants $c_1 > 0$ and $n_0 > 0$ exist such that $|F_n| > c_1 n^2$ for all $n \geq n_0$.

The idea now is to inject $F_n$ into $I$ in a controlled way. First, it is clear that the conclusion of the lemma holds for $I$ if and only if it holds for $ \{1 + i \st i \in I\}$. Hence, we assume without loss of generality that $I \not\subseteq (-\infty, 0]$. Choose positive integers $a$ and $b$ such that $I$ contains the interval $\left[\frac a b , \frac{a+1}{b}\right]$. Set $c = \frac 1 2 \pfrac{c_1}{b^2}$, and choose $n_0 \geq n_1$ such that $\floor{n/b} \geq n_1$ and $c_1\of{\frac n b - 1}^2 > c n^2$ for all $n \geq n_0$. We claim that $n \geq n_0$ implies that the number of rationals $x \in I$ with denominator at most $n$ is at least $c n^2$.

To do this, consider the injection $F_{\floor{n/b}} \to I$ given by $x \mapsto \frac{a+x}{b}$. Note that the rationals in the image of this map have denominator at most $n$. Hence the total number of rationals in $I$ with denominator at most $n$ is at least the order of $F_{\floor{n/b}}$. For all $n \geq n_0$, this order is at least $c_1 \of{\floor{n/b}}^2$, which in turn is greater than $c n^2$.
\end{proof}

This completes the proof of the main theorem in the case where $M$ is a sphere.

\bigskip
\section{Proof of main theorem for the torus}\label{sec:proofT2}
\bigskip

Assume now that $M$ is diffeomorphic to the torus and has infinite isometry group. In this case, there exists an isometric covering map from
	\[\sigma:(\R \times \R, ds^2 + h(s)^2 d\theta^2) \to M,\]
where $h:\R \to \R$ is some smooth, positive, and periodic function on $\R$. To fix notation, we perform a global scaling so that the period is $\pi$.

As with the case where $M$ is diffeomorphic to $\s^2$, we obtain the same geodesic equations and Clairaut relation. However, Lemma \ref{lem:Clairaut} does not hold since it is possible for geodesics to have the property $|s(t)| \to \infty$ as $t \to \infty$. Indeed, this is the case for meridians. As a substitute, we make the following easy observation.

\begin{lemma}
The $\pi$--periodic function $h:\R \to \R$ has at least one of the two following properties:
	\begin{enumerate}
	\item (non-isolated case) There exist infinitely many critical points in $(0,\pi)$.
	\item (asymptotic case) There exists an isolated local minimum at some $s_0 \in \R$.
	\end{enumerate}
\end{lemma}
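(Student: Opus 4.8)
The plan is to dichotomize on the number of critical points of $h$ in a single period. If $h$ has infinitely many critical points in $(0,\pi)$, conclusion (1) holds and there is nothing to prove; so the substance lies entirely in the complementary case. I would therefore assume that $h$ has only finitely many critical points in $(0,\pi)$ and show that conclusion (2) must then hold.

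The first step is to observe that, by $\pi$-periodicity, the set $C = \{s \in \R \st h'(s) = 0\}$ of all critical points of $h$ is a $\pi$-periodic subset of $\R$ whose intersection with any half-open period $[0,\pi)$ is finite; hence $C$ is discrete, i.e. every critical point of $h$ is isolated. The second step is to produce a candidate: since $h$ is continuous and $\pi$-periodic, it descends to a continuous function on the compact circle $\R/\pi\Z$ and so attains its minimum value at some $s_0 \in \R$. Because $h$ is differentiable on all of $\R$ (with no boundary to worry about), $s_0$ is necessarily a critical point, hence by the first step an isolated one.

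It then remains to upgrade ``isolated critical point which is a global minimum'' to ``isolated local minimum'' in the sense required by the lemma. For this I would pick $\ep > 0$ so small that $s_0$ is the only critical point of $h$ in $(s_0 - \ep, s_0 + \ep)$. On each punctured half-interval $(s_0, s_0 + \ep)$ and $(s_0 - \ep, s_0)$ the derivative $h'$ is nonvanishing, hence of constant sign; since $h(s_0)$ is a minimum value, $h'$ must be positive on the right half and negative on the left half, so $h(s) > h(s_0)$ whenever $0 < |s - s_0| < \ep$. Thus $s_0$ is an isolated local minimum and conclusion (2) holds.

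The argument is elementary, and the only step that requires a little care — and the natural place to slip — is the last one, where one must rule out a local minimum that fails to be isolated (a flat stretch, or an accumulation of minima). This is exactly what the finiteness hypothesis on the critical set provides, via the constant-sign argument for $h'$ on either side of $s_0$.
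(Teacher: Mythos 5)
Your proof is correct, and it supplies exactly the argument the paper omits: the lemma is stated there as an ``easy observation'' with no proof given. Your dichotomy (finitely many critical points per period $\Rightarrow$ the critical set is discrete $\Rightarrow$ the global minimum of the periodic function $h$ is attained at an isolated critical point, which the constant-sign argument for $h'$ on either side upgrades to a strict, isolated local minimum) is the natural route, and you correctly identify and handle the only delicate point, namely ruling out a non-isolated minimum.
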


In the first case of the lemma, it follows that $N(\ell) = \infty$ for all $\ell \geq 2 \pi \max(h)$. In the second case, it follows as in the case where $M$ is a sphere that an asymptotic geodesic exists, that the rotation function $R(a)$ is unbounded and hence not locally constant, and therefore that $N(\ell) \geq c \ell^2$ asymptotically in $\ell$ for some constant $c>0$.


\end{document}